\title{Left-Right Pairs and Complex Forests of Infinite Rooted Binary Trees}
\author{Nina Zubrilina}
\affil{Stanford University Department of Mathematics\\ Email: nina57@stanford.edu}
\date{\today}
\begin{document}
\maketitle

\newtheorem{theorem}{Theorem}[section]
\newtheorem{fact}[theorem]{Fact}
\newtheorem{corollary}[theorem]{Corollary}
\newtheorem{lemma}[theorem]{Lemma}
\newtheorem{defn}[theorem]{Definition}
\newtheorem{remark}[theorem]{Remark}
\newtheorem*{claim}{Claim}
\newtheorem{prob}{Problem}[section]
\newcommand\underrel[2]{\mathrel{\mathop{#2}\limits_{#1}}}
\newcommand\tab[1][0.5cm]{\hspace*{#1}}
\newcommand{\Prob}{\mathbb{P}}
\newcommand{\E}{\mathbb{E}}
\newcommand{\eps}{\varepsilon}
\newcommand{\norm}[1]{\left\lVert#1\right\rVert}
\newcommand\abs[1]{\left|#1\right|}
\newcommand{\N}{\mathbb{N}}
\newcommand{\Nz}{\mathbb{N}_0}
\newcommand{\Z}{\mathbb{Z}}
\newcommand{\R}{\mathbb{R}}
\newcommand{\Hp}{\mathbb{H}}
\newcommand{\Q}{\mathbb{Q}}
\newcommand{\C}{\mathbb{C}}
\newcommand{\SLN}{\mathrm{SL}_2(\mathbb{N}_0)}
\newcommand{\G}{\mathcal{G}}
\newcommand{\fancyF}{\mathcal{F}}
\newcommand{\D}{\mathcal{D}_0}
\newcommand{\Dbar}{\overline{\mathcal{D}}_0}
\newcommand{\mat}[4] {\left(\begin{array}{cccc}#1 & #2\\ #3 & #4 \end{array}\right)}
\newcommand{\Cbar}{\overline\C}
\newcommand{\PIm}{\mathbb{I}_{\geq0}}
\newcommand{\rad}{\mathrm{rad}}
\newcommand{\diam}{\mathrm{diam}}
\newcommand{\id}{\mathrm{Id}}
\renewcommand{\L}{\mathcal{L}}
\renewcommand{\Im}{\mathrm{Im}}

\begin{abstract}
Let $\D:= \{x + iy \ \vert x, y >0\}$, and let $(L, R)$ be a pair of M\"{o}bius transformations corresponding to $\SLN$ matrices such that $R(\D)$ and $L(\D)$ are disjoint. Given such a pair (called a left-right pair), we can construct a directed graph $\fancyF(L, R)$ with vertices $\D$ and edges $\{(z, R(z))\}_{z \in \D} \cup \{(z, L(z))\}_{z \in \D}$, which is a collection of infinite binary trees. We answer two questions of Nathanson by classifying all the pairs of elements of $\SLN$ whose corresponding M\"{o}bius transformations form left-right pairs and showing that trees in $\fancyF(L, R)$ are  always rooted.

\end{abstract}

\section{Introduction}
An \emph{infinite rooted binary tree} is a directed tree with infinitely many vertices such that every vertex has outdegree $2$, there is one vertex with indegree $0$, and every other vertex has indegree $1$. In an \emph{infinite rootless binary tree} every vertex has outdegree $2$ and indegree $1$. 

Let $\Cbar = \C \cup \{\infty\}$ be the Riemann sphere, $\Nz:= \N \cup \{0\}$ and $$\SLN: = \Bigg\{\mat{a}{b}{c}{d} \ \vert \ a, b, c, d \in \N_0, ad - bc = 1\Bigg\}.$$ For $T= \mat{a}{b}{c}{d} \in \SLN$, we let $T(z): \Cbar \to \Cbar$ be the corresponding M\"{o}bius transformation:
$$T(z):= \frac{az + b}{cz + d}.$$ We let $\D: = \{x + iy \ \vert x, y > 0\} \subseteq \Cbar$, and for $L, R \in \SLN$, we call $(L, R)$ a \emph{left-right pair} if $L(\D) \cap R(\D) = \emptyset.$ For any $T \in \SLN$, $T(\D) \subseteq \D,$ so for $L, R \in \SLN$, the graph $\fancyF(L, R)$ with vertices $\D$ and edges $ \big\{\big(z, L(z)\big)\big\vert \ z \in \D\big\}\cup\big\{\big(z, R(z)\big)\big\vert \ z \in \D\big\}$ is well-defined; moreover, if $(L, R)$ is a left-right pair, $\fancyF(L, R)$ is a collection of infinite binary trees (\cite{nathanson}).

Graphs constructed this way on various domains often have interesting numerical properties. A particularly well-studied example of this is the Calkin-Wilf tree (\cite{calkinwilf}), which is a tree with vertices $\Q_{>0}$ built on the left-right pair 
$$R = \mat{1}{0}{1}{1}, \ L = \mat{1}{1}{0}{1}.$$ 
Motivated by the beautiful properties of the Calkin-Wilf tree (see \cite{bates}, \cite{bates2}, \cite{chan}, \cite{dilcher}, \cite{gibbons}, \cite{han}, \cite{Mallows}, \cite{mansour},  \cite{nath2}, \cite{nath3}, \cite{nath4}, \cite{rez}), Nathanson (\cite{nathanson}) studied the more general construction $\fancyF(L, R)$ described above. He posed the following problems:

\begin{enumerate}
\item Classify left-right pairs in $\SLN$; 
\item Determine left-right pairs $(L,R)$ whose associated forests contain infinite binary rootless trees. 
\end{enumerate}
We answer these questions with the following theorems:
\begin{theorem}\label{lrpairs}
Let $$A = \mat{a_1}{b_1}{c_1}{d_1}, B = \mat{a_2}{b_2}{c_2}{d_2} \in \SLN.$$ Then $(A, B)$ is a left-right pair if and only if $a_1d_2 \leq b_2c_1$ or $ a_2d_1 \leq  c_2b_1$.
\end{theorem}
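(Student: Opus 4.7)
The plan is to analyze Theorem~\ref{lrpairs} geometrically: I will show that for every $T \in \SLN$, the image $T(\D)$ is an explicit open upper half-disk (or shifted quadrant) whose diameter lies on the positive real axis, and then translate disjointness of two such regions into the stated arithmetic inequalities.

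First, for $T = \mat{a}{b}{c}{d} \in \SLN$, I would describe $T(\D)$ explicitly. Note that $a, d \geq 1$ always, since $a = 0$ or $d = 0$ would force $bc = -1$, impossible in $\Nz$. The transformation $T$ is a conformal automorphism of $\Hp$ restricting to an orientation-preserving homeomorphism of $\R \cup \{\infty\}$. The boundary $\partial \D \subset \Cbar$ is the union of the positive real axis $[0, \infty]$ and the positive imaginary axis $[0, i\infty]$. Under $T$, the positive real axis maps to $[T(0), T(\infty)] = [b/d, a/c]$ (with the convention $a/0 = \infty$). The imaginary axis, viewed as a generalized circle through $0$ and $\infty$ orthogonal to $\R$, maps by conformality to the unique generalized circle through $b/d, a/c$ orthogonal to $\R$: the circle with diameter $[b/d, a/c]$ when $c > 0$, and the vertical line $\Re z = b$ when $c = 0$ (which forces $a = d = 1$ and $T(z) = z + b$). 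Hence $T(\D)$ is the open upper half of the disk with diameter $[b/d, a/c]$ when $c > 0$, and the open quadrant $\{\Re z > b, \Im z > 0\}$ when $c = 0$; testing a single interior point such as $T(1 + i)$ pins down the correct side.

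Second, I would reduce disjointness of $A(\D)$ and $B(\D)$ to an interval condition. Let $I_A := [b_1/d_1, a_1/c_1]$ and $I_B := [b_2/d_2, a_2/c_2]$. I claim $A(\D) \cap B(\D) = \emptyset$ if and only if $I_A$ and $I_B$ share at most one point. If their interiors overlap along an open subinterval of $\R$, then the corresponding full disks (or half-planes) share a two-dimensional neighborhood of that subinterval, so their open upper halves intersect. Conversely, if $I_A \cap I_B$ has at most one point, then the two closed disks meet at most at a single real point, and since our open regions exclude the real axis, they remain disjoint.

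Third, I would unpack the interval condition arithmetically: $I_A$ and $I_B$ share at most one point iff $a_1/c_1 \leq b_2/d_2$ or $a_2/c_2 \leq b_1/d_1$, which, after clearing denominators (using $d_i \geq 1$ and the convention $a/0 = \infty$), becomes exactly $a_1 d_2 \leq b_2 c_1$ or $a_2 d_1 \leq b_1 c_2$. The main obstacle is the geometric claim in Step 1 that $T(\D)$ is precisely the described half-disk; this relies on the conformality of Möbius transformations and the orthogonality of the coordinate axes at $0$ and $\infty$. A secondary subtlety is the boundary behavior: since $\D$ is open and the theorem allows equality in the inequalities, half-disks tangent at a single real point still count as disjoint, which is precisely why the bounds are non-strict.
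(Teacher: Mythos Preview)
Your proposal is correct and follows essentially the same route as the paper: you identify $T(\D)$ as an open half-disk (what the paper calls a \emph{slice}) bounded below by the real interval $[b/d,\,a/c]$ and above by the hyperbolic geodesic through those endpoints, then reduce disjointness of $A(\D)$ and $B(\D)$ to the condition that the two base intervals meet in at most a point, and finally clear denominators. The only cosmetic difference is that the paper phrases Step~1 via geodesics of $\Hp$ rather than via conformality and orthogonality, but the content is the same.
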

\begin{theorem}\label{allrooted}
Let $L, R \in \SLN$ such that $(L, R)$ is a left-right pair. Then all the trees in $\fancyF(L, R)$ are rooted.
\end{theorem}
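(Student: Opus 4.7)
The plan is to show that every tree in $\fancyF(L, R)$ is rooted by ruling out infinite backward paths: for any $z_0 = x_0 + i y_0 \in \D$, suppose there is a sequence $z_0, z_1, z_2, \ldots \in \D$ with $z_{n-1} = T_n(z_n)$ for some $T_n \in \{L, R\}$. Since $(L, R)$ is a left-right pair, at most one of $L^{-1}(w)$ and $R^{-1}(w)$ lies in $\D$, so the path is uniquely determined by a word $T_1, T_2, \ldots \in \{L, R\}^{\N}$, and $z_n = M_n^{-1}(z_0)$ where $M_n = T_1 T_2 \cdots T_n = \mat{a_n}{b_n}{c_n}{d_n}$. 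The goal is to derive a contradiction for every such infinite word.

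The analytic input is a sharp bound on the entries of $M_n$. Using $\det M_n = 1$, one computes
\[
\Im(z_n) = \frac{y_0}{|a_n - c_n z_0|^2}, \qquad \mathrm{Re}(z_n) = \frac{(d_n x_0 - b_n)(a_n - c_n x_0) - c_n d_n y_0^2}{|a_n - c_n z_0|^2}.
\]
When $c_n \geq 1$, positivity of $\mathrm{Re}(z_n)$ forces $u_n := d_n x_0 - b_n$ and $v_n := a_n - c_n x_0$ to have the same sign; since $a_n/c_n - b_n/d_n = 1/(c_n d_n) > 0$ (note $d_n \geq 1$ always in $\SLN$, since $d = 0$ would force $-bc = 1$), both must be positive. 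The identity $c_n u_n + d_n v_n = a_n d_n - b_n c_n = 1$ combined with AM-GM yields $u_n v_n \leq 1/(4 c_n d_n)$, while positivity of the numerator gives $u_n v_n > c_n d_n y_0^2$; together these force $c_n d_n < 1/(2 y_0)$. When instead $c_n = 0$, $\det M_n = 1$ forces $a_n = d_n = 1$, so $M_n$ is the translation $z \mapsto z + b_n$ and $z_n \in \D$ requires $b_n < x_0$.

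The combinatorial input is that along any infinite word $T_1, T_2, \ldots$, either $c_n d_n \to \infty$ or $b_n \to \infty$, contradicting the bounds above. From $c_{n+1} = c_n a_{T_{n+1}} + d_n c_{T_{n+1}}$ and $a \geq 1$ for every $\SLN$ matrix, the sequence $c_n$ is non-decreasing and strictly increases whenever $T_{n+1}$ has $c_{T_{n+1}} \geq 1$. If infinitely many $T_i$ satisfy $c_{T_i} \geq 1$, then $c_n \to \infty$ and hence $c_n d_n \to \infty$. Otherwise, from some index $N$ all $T_i$ are translations, so $M_n = M_N \mat{1}{\Sigma_n}{0}{1}$ for $n > N$ with $\Sigma_n \to \infty$; this gives $c_n = c_N$ and $d_n = c_N \Sigma_n + d_N$, so $c_n d_n \to \infty$ provided $c_N \geq 1$. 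The remaining case $c_N = 0$ forces every $T_i$ to be a translation, and the left-right pair hypothesis excludes both $L$ and $R$ being translations (two translations in $\SLN$ have overlapping images in $\D$), so the word is constant on the unique translating letter, and $b_n$ grows linearly.

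I expect the main obstacle to be the derivation of the sharp bound $c_n d_n < 1/(2 y_0)$: the AM-GM reduction, cleanly packaged through the substitution $(u_n, v_n)$ and the identity $c_n u_n + d_n v_n = 1$, is the nonobvious step that converts the qualitative condition $\mathrm{Re}(z_n) > 0$ into quantitative control over the entries of $M_n$. Once that bound is in place, the rest is careful bookkeeping of how $c_n$ and $d_n$ evolve under right multiplication by $L$ or $R$, with the all-translation edge case absorbed by the left-right pair hypothesis.
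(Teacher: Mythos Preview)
Your argument is correct and takes a genuinely different route from the paper. The paper works geometrically: it shows that each $M(\D)$ is a \emph{slice} (the region in $\D$ under a hyperbolic geodesic) and proves a shrinking lemma asserting that any $M\in\SLN$ with $c\ge 1$ sends a slice of diameter $t$ to one of diameter at most $\L(t)=t/(t+1)$; iterating along the backward word traps $z_0$ in slices of diameter tending to $0$, forcing $\Im(z_0)=0$. You bypass the geometric setup entirely: from the explicit formula for $\mathrm{Re}(z_n)$ you extract the identity $c_n u_n+d_n v_n=1$ and apply AM--GM to get $c_n d_n<1/(2y_0)$. These two bounds are in fact the same statement, since the slice $M_n(\D)$ has radius exactly $1/(2c_n d_n)$, so your inequality is precisely $\Im(z_0)<\rad(M_n(\D))$; but your derivation is self-contained and never mentions geodesics. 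On the combinatorial side, the paper controls slice diameters through iterates of $\L$, while you track the monotone growth of $c_n$ (and then $d_n$) directly from the recursion $c_{n+1}=c_n a_{T_{n+1}}+d_n c_{T_{n+1}}$; both arguments culminate in the same case split over whether one of $L,R$ is a translation. The paper's packaging makes the hyperbolic geometry visible; yours is more elementary and delivers the sharp constant in one stroke. One small point worth making explicit: your claim $\Sigma_n\to\infty$ in the tail-of-translations case uses that neither $L$ nor $R$ is the identity, which you invoke only in the final all-translation subcase; it holds throughout because $\id(\D)=\D$ would meet any $\SLN$ image.
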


We let $\Dbar := \D \cup \partial \D = \D \cup \R_{\geq 0} \cup i \cdot \R_{\geq 0} \cup \{\infty\}$. We let $\Hp := \{x + iy \ \vert y >0\} $ be the Poincar\'e half-plane model for hyperbolic space with $\partial \Hp = \R \cup\{ \infty\}$ its boundary. We let $\G$ be the set of geodesics of $\Hp$, which are semicircles with end points on the real axis and vertical rays emitting from points on the real axis, endpoints included. Every element of $\G$ intersects the boundary of $\Hp$ in exactly two points, which we will refer to as the endpoint of $g$. M\"{o}bius transformations map elements of $\G$ into one another bijectively, with endpoints mapping to endpoints. 

For $g \in \G$ such that $g \subseteq \Dbar$, the open region contained in $\D$ and bounded by $g$ and $\R \cup \{\infty\}$ is called a \emph{slice}. We call the endpoints of $g$ the endpoints of that slice. For a slice $P$, we let 
$$\rad(P):= \sup_{z \in P} \Im(z), \diam(P) = 2 \rad(P).$$ Note that $\rad(P)$ is the radius of the geodesic bounding $P$ when the geodesic is a semicircle and infinity when it is a ray. We let $g(P)$ be the geodesic bounding $P$. If $g(p)$ has endpoints $x, y$ with $x < y < \infty$, we let $I(P) := [x, y] \subseteq \R_{\geq 0}$; if $g(P)$ has endpoints $x, \infty$, we let $I(P):= [x, + \infty )\cup \{\infty\}.$ Note that $\partial P = g(P) \cup I(P)$. 

We call $w$ an an \emph{ancestor} of $z$ if there is a way from $w$ to $z$ via the edges of $\fancyF(L, R)$.

We will abuse notation by writing $x/0 := \infty \in \Cbar$ when $x \neq 0$. We will also write $\infty > x$ for all $x \in  \R$.

\section{Left-right pairs}
In this section we classify left-right pairs. We begin with a Lemma:

\begin{lemma}\label{everything}

Let $M = \mat{a}{b}{c}{d}\in \SLN$, and let $P$ be a slice with endpoints $x,y$, with $x < y$. Then $M(P)$ is a slice with endpoints $M(x),M(y)$ and $M(x) <M(y)$. 
\end{lemma}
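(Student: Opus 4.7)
The plan is to establish three things in sequence: (i) $M$ restricted to $\R_{\geq 0} \cup \{\infty\}$ is strictly increasing, (ii) $M(g(P))$ is a geodesic contained in $\Dbar$ with endpoints $M(x), M(y)$, and (iii) $M(P)$ is exactly the slice bounded by this geodesic.

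For (i), I would compute $M'(z) = (ad - bc)/(cz + d)^2 = 1/(cz + d)^2$. Since $c, d \in \Nz$ cannot both vanish (because $ad - bc = 1$), one has $cz + d > 0$ for all $z \in \R_{\geq 0}$, so $M$ is strictly increasing on $\R_{\geq 0}$. To extend to $\infty$, I would check that $M(z) < M(\infty) = a/c$ for finite $z \geq 0$ (interpreting $a/0 = \infty$), which again boils down to $ad - bc = 1 > 0$. In particular $M(x) < M(y)$.

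For (ii), I would invoke the fact from the introduction that Möbius transformations send $\G$ bijectively to $\G$, mapping endpoints to endpoints. Applied to $g(P)$, this yields a geodesic $M(g(P))$ with endpoints $M(x), M(y)$, both lying in $\R_{\geq 0} \cup \{\infty\}$ by (i). Any geodesic with both endpoints in $\R_{\geq 0} \cup \{\infty\}$ is either a semicircle above $\R_{\geq 0}$ or a vertical ray from a non-negative real, hence is contained in $\Dbar$.

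For (iii), I would use a topological argument. Since $M$ is a homeomorphism of $\Cbar$, it carries the Jordan curve $\partial P = g(P) \cup I(P)$ to the Jordan curve $M(g(P)) \cup M(I(P))$; by (i) the image $M(I(P))$ is exactly $[M(x), M(y)]$ (respectively $[M(x), +\infty) \cup \{\infty\}$ if $M(y) = \infty$), which is precisely $I(Q)$ for the candidate slice $Q$ with endpoints $M(x), M(y)$. The complement of the image curve in $\Cbar$ has two components, one being $Q$ (which sits in $\D$) and the other containing all of $\Cbar \setminus \Dbar$. Because $M(\D) \subseteq \D$, the image $M(P)$ must be the component inside $\D$, so $M(P) = Q$. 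The main obstacle I anticipate is purely bookkeeping: one must carefully track the degenerate cases $c = 0$ (so $M(\infty) = \infty$) and $y = \infty$ to ensure that monotonicity, the convention $x < \infty$, and the identification $M(I(P)) = I(M(P))$ all remain consistent. Once those cases are handled, the proof is a direct assembly of monotonicity, the geodesic-to-geodesic property of Möbius maps, and the one-line topological argument above.
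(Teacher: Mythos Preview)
Your proposal is correct and follows essentially the same route as the paper: establish that $M$ is order-preserving on $\R_{\geq 0}\cup\{\infty\}$, that $M$ sends the bounding geodesic to a geodesic inside $\Dbar$, and that the image region is therefore the slice with endpoints $M(x)<M(y)$. The only cosmetic differences are that the paper separates the $c=0$ case up front and deduces $g'\subseteq\Dbar$ from $M(\Dbar)\subseteq\Dbar$, whereas you observe directly that a geodesic with nonnegative endpoints lies in $\Dbar$ and make the Jordan-curve/connected-component step explicit.
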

\begin{proof}
 If $c = 0$, since $ad - bc = 1$, we can conclude that $a = d = 1$ and $M$ is a translation by a non-negative integer $b$, so $M(P)$ is again a slice with endpoints $x + b = M(x)$, $y + b =  M(y)$.  
 
Suppose now $c \neq 0$. Let $I = I(P), g = g(P)$.  $M$ maps reals into reals, is continuous and injective and preserves orientation, so $M(I) = [M(x), M(y)]$. Also, $M(g) = g' \in \G$ is again a geodesic. By a result of Nathanson (\cite[Theorem 1]{nathanson}, or simply by computing the real and imaginary parts), $M(\D) \subseteq \D$, so by continuity of $M$ on $\Cbar$, $M(\Dbar) \subseteq \Dbar.$ Hence, $g' \subseteq \Dbar$. Thus, $M(P)$ is an open region contained in $\Dbar$ which is bounded by $M(I)$ and some geodesic $g' \subseteq \Dbar$, which means a slice with endpoints $M(x), M(y), M(x)< M(y)$. 

\end{proof}
\begin{corollary}\label{imageisslice}
Let $M = \mat{a}{b}{c}{d} \in \SLN$. Then: $M(\D)$ is a slice with endpoints $b/d$ and $a/c$ with $b/d < a/c$; moreover, if $c \neq 0$, $\diam(M(P)) < \infty$.
\end{corollary}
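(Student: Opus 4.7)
Plan: My approach is to recognize that $\D$ itself qualifies as a slice and then apply Lemma~\ref{everything} directly. The positive imaginary axis, with endpoints $0$ and $\infty$ included, is a vertical ray emanating from a point on the real axis, hence an element of $\G$. It is contained in $\Dbar = \D \cup \R_{\geq 0} \cup i \cdot \R_{\geq 0} \cup \{\infty\}$, and the open region contained in $\D$ and bounded by this geodesic together with $\R \cup \{\infty\}$ is precisely $\D$. Thus $\D$ is a slice with endpoints $0$ and $\infty$, and under the convention $\infty > x$ for all $x \in \R$ we have $0 < \infty$.

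Applying Lemma~\ref{everything} with $P = \D$ would then immediately yield that $M(\D)$ is a slice with endpoints $M(0) = b/d$ and $M(\infty) = a/c$, satisfying $b/d < a/c$. This settles the first assertion; it uses the convention $x/0 = \infty$ to cover the case $c = 0$ (where $M$ is a translation) and the case $d = 0$ (should it occur).

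For the diameter claim, I would verify that when $c \neq 0$, we must also have $d \neq 0$: since $ad - bc = 1$ and $a, b, c, d \in \Nz$, the equation $d = 0$ would force $bc = -1$, which is impossible. Hence both endpoints $b/d$ and $a/c$ are finite non-negative reals, the bounding geodesic $g(M(\D))$ is a semicircle rather than a vertical ray, and $\rad(M(\D)) = (a/c - b/d)/2 < \infty$, so $\diam(M(\D)) < \infty$.

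There is no real obstacle here; the corollary is a direct specialization of Lemma~\ref{everything}. The only step requiring care is the justification that $\D$ itself fits the definition of a slice, after which everything else is an unpacking of the conventions for $\infty$ and a routine exclusion of the case $d = 0$.
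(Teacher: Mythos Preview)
Your proposal is correct and follows essentially the same route as the paper: observe that $\D$ is itself a slice with endpoints $0$ and $\infty$, apply Lemma~\ref{everything} to obtain the endpoints $M(0)=b/d$ and $M(\infty)=a/c$, and then use $ad-bc=1$ with $a,b,c,d\in\Nz$ to rule out $d=0$ so that both endpoints are finite when $c\neq 0$. Your write-up is slightly more careful in justifying that $\D$ fits the definition of a slice, but the argument is the same.
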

\begin{proof}
Note $\D$ is a slice with endpoints $0, \infty$. We have $M(0) = b/d, M(\infty) = a/c$, so by Lemma \ref{everything}, $M(P)$ is a slice with those endpoints. Since $ad - bc  = 1$, $d \neq 0$, so when $c \neq 0$, we have 
$$\diam(M(P)) = a/c - b/d < \infty.$$ 
\end{proof}

Now we prove Theorem \ref{lrpairs}:
\begin{proof}[Proof of Theorem \ref{lrpairs}]
It is clear from the geometric interpretation of slices that two slices $A(\D), B(\D)$ do not intersect if and only if $I(A(\D)), I(B(\D))$ do not intersect on the interior. By Corollary \ref{imageisslice}, this happens if and only if either $a_1/c_1 \leq b_2/d_2$ or $a_2/c_2 \leq b_1/d_1$, which is equivalent to the statement of the theorem. 
\end{proof}

\section{Trees in $\fancyF(L, R)$ are always rooted}
In this section we show $\fancyF(L, R)$ contains only rooted trees. 
We let
\begin{defn}
Let $\L: \R_{> 0} \to \R_{> 0}$ via 
$$\L(t) := \frac{t}{t+1}.$$ Note that for $n \in \N$, 
$$\L^n(t) = \mat{1}{0}{1}{1}^n (t) = \frac{t}{nt + 1},$$ which decreases goes to $0$ as $n \to \infty$ for all $t > 0$. 
\end{defn}
\begin{lemma}\label{lengthchange}
Let $M = \mat{a}{b}{c}{d}  \in \SLN$. Let $P$ be a slice. Write $I:= I(P) = [x, x+t] \subseteq  \R_{\geq 0}$, where $t = \diam(P) < \infty$. Then $M(P)$ is also a slice, and:
\begin{enumerate}
\item If $c \neq 0$, $\diam(M(P)) \leq \L(t)$;
\item If $c = 0$, $\diam(M(P)) = t.$
\end{enumerate}
\end{lemma}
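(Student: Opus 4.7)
The plan is to reduce everything to an explicit formula for the length of $M(I(P))$ and then bound it.

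First I would invoke Lemma \ref{everything} to conclude that $M(P)$ is a slice with endpoints $M(x)$ and $M(x+t)$, with $M(x) < M(x+t)$. Hence, in both cases of the lemma,
$$\diam(M(P)) = M(x+t) - M(x).$$
A direct computation using $ad - bc = 1$ gives the key identity
$$M(x+t) - M(x) = \frac{a(x+t)+b}{c(x+t)+d} - \frac{ax+b}{cx+d} = \frac{t(ad-bc)}{(c(x+t)+d)(cx+d)} = \frac{t}{(c(x+t)+d)(cx+d)}.$$

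Case $c = 0$ is immediate: since $ad - bc = 1$ forces $a = d = 1$, the denominator equals $1$ and the diameter equals $t$. This handles part (2).

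For part (1), assume $c \neq 0$, so $c \geq 1$; also note $d \geq 1$, since $d = 0$ together with $a, b, c \in \Nz$ would give $ad - bc = -bc \leq 0$, contradicting $ad-bc=1$. Using $x \geq 0$, $c \geq 1$ and $d \geq 1$, I would estimate the denominator by
$$(c(x+t)+d)(cx+d) \geq (ct + d)\, d \geq (ct + 1)\cdot 1 \geq t + 1,$$
where the middle inequality expands as $ct(d-1) + d^2 - 1 \geq 0$, which holds since $c, t \geq 0$ and $d \geq 1$. Plugging this back into the identity yields
$$\diam(M(P)) = \frac{t}{(c(x+t)+d)(cx+d)} \leq \frac{t}{t+1} = \L(t),$$
as claimed.

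There is essentially no hard step here; the only mild obstacle is being disciplined about the algebraic bound on the denominator and making sure that $d \geq 1$ is used to bootstrap $(ct+d)d \geq ct+1$, which is what transforms the trivial bound $ct+d$ into one that exceeds $t+1$ via $c \geq 1$.
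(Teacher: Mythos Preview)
Your proof is correct and follows essentially the same approach as the paper: invoke Lemma~\ref{everything}, compute $M(x+t)-M(x)=t/\big((c(x+t)+d)(cx+d)\big)$ using $ad-bc=1$, and bound the denominator via $c,d\ge 1$ and $x\ge 0$. The only cosmetic difference is in the inequality chain on the denominator---the paper drops the factor $cx+d$ to $1$ and bounds $c(x+t)+d\ge t+1$, while you bound the product by $(ct+d)d\ge ct+1\ge t+1$---but the arguments are otherwise identical.
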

\begin{proof}
If $c = 0$, then $M$ is a translation, and $\diam(M(P)) = \diam(P)$. 

Suppose now $c \neq 0$.
By Lemma \ref{everything}, $M(P)$ is a slice with endpoints $M(x), M(x + t)$. We have:
\begin{align*}
M(x+t) - M(x)  &= \frac{a(x+t) + b}{c(x+t) + d} - \frac{ax + b}{cx + d} \\
&= \frac{\Big( a(x+t) + b\Big)\Big( cx + d\Big)- \Big( ax + b\Big)\Big( c(x+t) + d\Big)}{\Big( cx + d\Big)\Big( c(x+t) + d\Big)}  \\
&=\frac{t(acx + ad - cax - cb)}{\big( cx + d\big)\big( c(x+t) + d\big)} \\
&=\frac{t}{\big( cx + d\big)\big( c(x+t) + d\big)} \\
\end{align*}
 Since $ad - bc = 1$ and by assumption $c \neq 0$, we have $c, d \geq 1$, so
$$\frac{t}{\big( cx + d\big)\big( c(x+t) + d\big)} \leq  \frac{t}{ cx+ct + d} \leq \frac{t}{ t + 1} = \L(t)$$ as desired.
\end{proof}

\begin{theorem}\label{radto0}
Let $M = \mat{a}{b}{c}{d} \in \SLN$ with $c \neq 0$. Then $$\diam(M^n(\D))  \underset{n \to \infty}{\longrightarrow} 0.$$
\end{theorem}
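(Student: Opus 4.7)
The plan is to iterate Lemma \ref{lengthchange}. Writing $P_n := M^n(\D)$ and $t_n := \diam(P_n)$, I first observe that $\D$ is a slice with endpoints $0$ and $\infty$, so Lemma \ref{everything} together with induction shows that every $P_n$ is a slice and $P_{n+1} = M(P_n)$. The immediate obstruction to applying Lemma \ref{lengthchange} at $n=0$ is that $\diam(\D) = \infty$; this is where Corollary \ref{imageisslice} enters, giving $t_1 = a/c - b/d < \infty$ because $c \neq 0$. With a finite starting diameter in hand, the recursion can begin at $n = 1$.

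From $n = 1$ onward, Lemma \ref{lengthchange} (which again uses $c \neq 0$) yields $t_{n+1} \leq \L(t_n)$. Since $\L$ is strictly increasing on $(0,\infty)$ (its derivative is $1/(t+1)^2 > 0$), a one-line induction on $n$ promotes this one-step inequality to the uniform bound $t_n \leq \L^{n-1}(t_1)$ for all $n \geq 1$.

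Finally, the closed-form evaluation $\L^{k}(t) = t/(kt+1)$ recorded in the definition of $\L$ shows $\L^{n-1}(t_1) = t_1/((n-1)t_1 + 1) \to 0$ as $n \to \infty$, so $t_n \to 0$ as required. I do not expect a substantive obstacle: the preceding two lemmas and the corollary already do all the geometric work, and the argument is just a controlled iteration. The only point to be careful about is that the recursion cannot start at $\D$ itself, and the hypothesis $c \neq 0$ is precisely what cuts the first image $M(\D)$ down to finite diameter so that the contraction lemma can take over.
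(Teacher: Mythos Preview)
Your proof is correct and follows the paper's approach exactly: use Corollary \ref{imageisslice} to get a finite diameter $d$ for $M(\D)$ (using $c\neq 0$), then iterate Lemma \ref{lengthchange} to obtain $\diam(M^n(\D)) \leq \L^{n-1}(d) \to 0$. Your write-up is in fact slightly more explicit than the paper's, spelling out the monotonicity of $\L$ needed to pass from the one-step bound to the iterated bound.
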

\begin{proof}
By Corollary \ref{imageisslice}, $M(\D)$ is a slice with $\diam(M(\D)) =: d < \infty$. By Lemma \ref{lengthchange}, it follows that $M^n(\D)$ is a slice with $$\diam(M^n(\D)) \leq \L^{n - 1}(d),$$ which goes to $0$ as $n$ goes to infinity.
\end{proof}

\begin{corollary}\label{zerointersect}
For all $M = \mat{a}{b}{c}{d} \in \SLN$ with $M \neq \id$,
$$\bigcap\limits_{n \in \Nz} M^n(\D) = \emptyset.$$
\end{corollary}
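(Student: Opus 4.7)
The plan is to split on whether the lower-left entry $c$ of $M$ is zero, since Theorem \ref{radto0} handles the case $c \neq 0$ almost directly.

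First, I would dispose of the $c = 0$ case by elementary means. If $c = 0$ and $ad - bc = 1$ with $a, d \in \Nz$, then $a = d = 1$, so $M = \mat{1}{b}{0}{1}$ acts as translation by $b$. The hypothesis $M \neq \id$ forces $b \geq 1$, whence $M^n$ is translation by $nb$. Then $M^n(\D) = \{x + iy : x > nb,\ y > 0\}$, and any fixed $z = x_0 + iy_0 \in \D$ fails to lie in $M^n(\D)$ once $nb > x_0$, so the intersection is empty.

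The main case is $c \neq 0$. Here I would invoke Theorem \ref{radto0} to get $\diam(M^n(\D)) \to 0$, which (via $\rad = \diam/2$) implies $\rad(M^n(\D)) \to 0$. Since $M^n(\D)$ is a slice, every point $w \in M^n(\D)$ satisfies $\Im(w) \leq \rad(M^n(\D))$. Now suppose for contradiction some $z_0 \in \bigcap_n M^n(\D)$. Then $z_0 \in \D$, so $\Im(z_0) > 0$; but for all sufficiently large $n$ we have $\rad(M^n(\D)) < \Im(z_0)$, contradicting $z_0 \in M^n(\D)$. Hence the intersection is empty.

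I do not anticipate a serious obstacle: the $c = 0$ case is a direct computation, and the $c \neq 0$ case follows from Theorem \ref{radto0} together with the elementary observation that the imaginary part of any point in a slice is bounded by the slice's radius. The only mild point to be careful about is remembering that $\D$ is open so $\Im(z_0) > 0$ strictly, which is exactly what makes the shrinking-diameter argument work.
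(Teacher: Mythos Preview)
Your proposal is correct and follows essentially the same approach as the paper: both split on whether $c = 0$, handle the translation case directly, and in the $c \neq 0$ case invoke Theorem~\ref{radto0} together with the observation that $\Im(z) \leq \rad(M^n(\D))$ for any $z \in M^n(\D)$. The paper's proof is simply a terser version of yours.
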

\begin{proof}
If $c \neq 0$, this follows immediately from Theorem \ref{radto0}, because for every $z \in \D$, for sufficiently large $n$, we will have $\rad(M^n(\D)) < \Im(z)$. If $c = 0$, $M$ is a translation by a positive integer $b$, so again we are done.
\end{proof}

\begin{lemma}\label{cnot0}
Let $A = \mat{a_1}{b_1}{c_1}{d_1}, B= \mat{a_2}{b_2}{c_2}{d_2}$ such that $(L, R)$ is a left-right pair. Then at least one of $c_1, c_2$ is non-zero.
\end{lemma}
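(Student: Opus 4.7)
The plan is to proceed by contradiction: assume $c_1 = c_2 = 0$ and derive a contradiction with the disjointness condition $A(\D) \cap B(\D) = \emptyset$. The key observation, which was already used implicitly in the proof of Lemma \ref{everything}, is that if $c_i = 0$ then the determinant condition $a_i d_i - b_i c_i = 1$ combined with $a_i, d_i \in \N_0$ forces $a_i = d_i = 1$. Hence each such $M$ is a pure translation $z \mapsto z + b_i$ by a non-negative integer.

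Once both $A$ and $B$ are translations, I would compute the images directly: for any non-negative integer $b$, the translate $\{z + b : z \in \D\}$ is the open half-plane $\{x + iy : x > b,\ y > 0\}$. Two such half-planes (for $b_1, b_2 \in \N_0$) are always nested, so their intersection equals the smaller one and is non-empty. This contradicts $(A, B)$ being a left-right pair and finishes the proof.

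Alternatively, one can bypass the geometric step and invoke Theorem \ref{lrpairs}: when $c_1 = c_2 = 0$ we have $a_i = d_i = 1$, so $a_1 d_2 = 1 > 0 = b_2 c_1$ and $a_2 d_1 = 1 > 0 = c_2 b_1$, violating the criterion for a left-right pair. Either formulation occupies only a couple of lines. There is no genuine obstacle here; the lemma is essentially a bookkeeping step that rules out the degenerate case so that the techniques of Section 2 (Theorem \ref{radto0} and Corollary \ref{zerointersect}) can be applied to at least one of $A$ or $B$ in the sequel.
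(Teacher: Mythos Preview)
Your proposal is correct and follows essentially the same contradiction argument as the paper: assuming $c_1 = c_2 = 0$ forces both maps to be translations, whose images of $\D$ visibly overlap. The paper states this in a single line without spelling out the half-plane computation or the alternative appeal to Theorem~\ref{lrpairs}, but the content is identical.
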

\begin{proof}
Suppose $c_1 = c_2 = 0$. Then $A, B$ are both translations, and clearly $A(\D) \cap B(\D) \neq \emptyset$, which is a contradiction. 
\end{proof}
We are now ready to prove the main theorem. It suffices to show that any vertex $z \in \D$ cannot have infinitely many ancestors. We will prove this by contradiction. In particular, by assuming $z$ has infinitely many ancestors, we will show that $\Im (z)$ has to be less than $\eps$ for any $\eps > 0$. 

\begin{proof}[Proof of Theorem \ref{allrooted}]
\hspace{1in}\\

Write $$L =: \mat{a_1}{b_1}{c_1}{d_1}, R=: \mat{a_2}{b_2}{c_2}{d_2}.$$  By Lemma \ref{cnot0}, without loss of generalty $c_1 \neq 0$. Moreover, $R \neq \id$, because else $L(\D) \cap R(\D) = L(\D) \neq \emptyset$.  Suppose $z \in \D$ has infinitely many ancestors. By Corollary \ref{zerointersect}, any vertex can only have finitely many consecutive right ancestors. Hence, for every $n \in \N$, we can find $w \in \D$ such that  
$$z  = R^{\alpha_{1}} \circ L^{\alpha_{2}} \circ \cdots \circ L^{\alpha_{2n}}(w),$$ where $\alpha_{2k} \geq 1$ and $\alpha_{2k  - 1} \geq 0$ for $k \in \{1, \ldots, n\}$. 
 By Corollary \ref{imageisslice}, $L(\D)$ is a slice with $\diam(L(\D)) =: d < \infty$ and, by Lemma \ref{lengthchange},
 
$$\diam ( R^{\alpha_{1}} \circ L^{\alpha_{2}} \circ \cdots \circ L^{\alpha_{2n} - 1}(L(\D)) \leq \L^{\alpha_2 + \cdots + \alpha_{2n - 2} } (d)\leq \L^{n - 1}(d).$$ 

Since $z$ has to lie in $R^{\alpha_{1}} \circ L^{\alpha_{2}} \circ \cdots \circ L^{\alpha_{2n} - 1}(L(\D))$ for all $n$, it follows that 
$$0<\Im(z)  \leq  \L^{n - 1}(d) \underset{n \to \infty}{\longrightarrow} 0,$$ which is a  contradiction.

\end{proof}
\section{Acknowledgments}
The research was conducted during the Undergraduate Mathematics Research Program at University of Minnesota Duluth, and supported by grants NSF-$1358659$ and NSA H$98230$-$16$-$1$-$0026$. I would like to thank Joe Gallian very much for the wonderful Duluth REU, and for begin such an incredible and supportive mentor. 
\bibliographystyle{plain}
\bibliography{complexbinarytrees}

\end{document}